\newtheorem{lemma}{Lemma}[section]
\newtheorem{theorem}[lemma]{Theorem}
\newtheorem{remark}[lemma]{Remark}
\newtheorem{proposition}[lemma]{Proposition}
\numberwithin{equation}{section}
\title{\textsf{The minimal dimensions of faithful representations  for Heisenberg Lie superalgebras}}
\author{Wende Liu\footnote{Corresponding author. Email:
\texttt{wendeliu@ustc.edu.cn}}, Meiwei Chen
 \setcounter{footnote}{-1}\footnote{}\\
\small\textit{School of Mathematical Sciences, Harbin Normal University}\\
\small \textit{ Harbin  150025, P.R. China} \\
\small\textit{}\\
}
\date{ }
\begin{document}
\maketitle
\begin{quotation}
\small\noindent \textbf{Abstract}: This paper aims to determine the minimal dimensions and super-dimensions of faithful representations  for   Heisenberg Lie superalgebras over an algebraically closed field of characteristic zero.

 \vspace{0.2cm} \noindent{\textbf{Keywords}}: Heisenberg Lie superalgebra; faithful representation; minimal dimension

 \noindent{\textbf{MSC}}: 17B30, 17B10, 17B81
\end{quotation}
\setcounter{section}{0}
\section{Introduction}

Throughout $\mathbb{F}$ is an algebraically closed field of characteristic zero and all vector spaces and algebras are over $\mathbb{F}$ and of finite
dimensions.

Ado's theorem says that every finite dimensional Lie (super)algebra has a finite-dimensional faithful representation \cite{VK}. Let $\frak{g}$ be a Lie (super)algebra and write
$$
\mu(\mathfrak{g})=\min \{\dim V\mid   V \mbox{ is a faithful $\mathfrak{g}$-module}\}.
$$
It is in general  difficult to determine $\mu(\mathfrak{g})$. The earliest result is that $\mu(\mathfrak{g})=\lceil 2\sqrt{\dim \mathfrak{g}-1} \rceil$ for an abelian Lie algebra $\mathfrak{g}$, which is due to Schur for $\mathbb{F}=\mathbb{C}$ and to Jacobson  for arbitrary $\mathbb{F}$ (see also \cite{Mirzakhani}, for a simple proof due to  Mirzakhani).
In 1998 Burde concluded that $\mu(\mathfrak{h}_{m})=m+2$  for Heisenberg Lie algebra  $\mathfrak{h}_{m}$  of dimension $2m+1$   \cite{DB}.
In 2008 Burde and Moens established an explicit formula of $\mu(\mathfrak{g})$ for  semi-simple and reductive Lie algebras \cite{DW}.
 In 2009 Cagliero and Rojas obtained a formula $\mu(\mathfrak{h}_{m, p})$ for the current Heisenberg Lie algebra $\mathfrak{h}_{m, p}$ \cite{LN}. One can also find the formula  $\mu(\mathfrak{J})$   for a Jordan algebra $\mathfrak{J}$ with the trivial multiplication \cite{WS}.

However, very little is known about the function $\mu$ for  Lie superalgebras. In 2012   Liu and Wang determined
$\mu(\mathfrak{g})=\lceil 2\sqrt{\dim \mathfrak{g}} \rceil$ for  any purely odd Lie superalgebra $\mathfrak{g}$ \cite{WS} and it remains open   to determine $\mu(\mathfrak{g})$ for an abelian Lie superalgebra $\frak{g}$ with nontrivial even part.
In this paper, we shall determine the minimal (super-)dimensions of the faithful representations for Heisenberg Lie superalgebras.

A two-step nilpotent Lie superalgebra with $1$-dimensional center is called a Heisenberg Lie superalgebra. Then
Heisenberg Lie superalgebras split into the following two types according to the parities of their centers   \cite{MGO}.
Write $\mathfrak{h}_{m, n}$ for the Heisenberg Lie superalgebra with 1-dimensional even center $\mathbb{F}z$, which has a $\mathbb{Z}_{2}$-homogeneous basis
 \begin{eqnarray*}
 (u_{1},\ldots,u_{m},v_{1},\ldots,v_{m};z\mid w_{1},\ldots,w_{n})
 \end{eqnarray*}
with multiplication given by
$$[u_{i}, v_{i}]=-[v_{i}, u_{i}]=z=[w_{j}, w_{j}], \quad i=1,\ldots,m,  j=1,\ldots,n,$$
the remaining brackets being zero. Hereafter $\mathbb{Z}_{2}=\{\bar{0}, \bar{1}\}$ is the group of order $2$.

Write $\mathfrak{h}_{n}$ for the Heisenberg Lie superalgebra with 1-dimensional odd center $\mathbb{F}z$, which has a $\mathbb{Z}_{2}$-homogeneous basis
 \begin{eqnarray*}
 (v_{1},\ldots,v_{n}\mid z; w_{1},\ldots,w_{n})
 \end{eqnarray*}
with multiplication given by
$$[v_{i}, w_{i}]=z=-[w_{i}, v_{i}], i=1,\ldots,n,$$
the remaining brackets being zero.

Both $\mathfrak{h}_{m, n}$ and $\mathfrak{h}_{n}$ are nilpotent.
 Note that $\mathfrak{h}_{m, 0}$ is a Heisenberg Lie algebra and $\mathfrak{h}_{0, n}$ is isomorphic to the Heisenberg Lie superalgebra considered in \cite[p.18]{VK}, whose even part coincides with 1-dimension center.
However, the Heisenberg Lie superalgebras with odd centers, $\mathfrak{h}_{n}$,  have no  analogs  in Lie algebras. We should also mention that  Hegazi studied  representations of the Heisenberg Lie superalgebras of even center, $\mathfrak{h}_{m, n}$,  and tried to find a finite-dimensional faithful representation of $\mathfrak{h}_{m, n}$ \cite[\S 3]{Hegazi}.

Throughout this paper,  subalgebras and  (sub)modules of Lie superalgebras are assumed to be $\mathbb{Z}_2$-graded. Hereafter we write $\frak{g}$ for $\mathfrak{h}_{m,n}$ or $\mathfrak{h}_{n}$. A main result of this paper is that
\begin{eqnarray*}
\mu(\frak{g})=\left\{\begin{array}{ll}
  m+\lceil n/2\rceil+2 & \frak{g}=\frak{h}_{m,n}\\
n+2 &\frak{g}=\frak{h}_{n}.
\end{array}\right.
\end{eqnarray*}
To formulate the super-dimensions of the faithful representations, write for  $i\in \{0,1\}$,
$$
\mu_{i}(\mathfrak{g})=\min \{\dim V_{\bar{i}}\mid  \mbox{$V$ is a faithful $\mathfrak{g}$-module}\};
$$
$$
\mu_{i}^{*}(\mathfrak{g})=\min \{\dim V\mid \mbox{$V$ is a faithful $\mathfrak{g}$-module with} \dim V_{\bar{i}}=\mu_{i}(\mathfrak{g})  \}.
$$
In this paper we also determine the values  $\mu_{i}(\mathfrak{g})$ and $\mu_{i}^{*}(\mathfrak{g})$.

\section{Minimal dimensions}

Since Engel's theorem holds for Lie superalgebras, as in Lie algebra case \cite[Lemma 1]{DB}, we have
\begin{lemma}\label{lem1}
Let $L$ be a nilpotent Lie superalgebra with a $1$-dimensional  center $\mathbb{F}z $.
Then a representation $\lambda: L\rightarrow \mathfrak{gl}(V)$ is faithful if and only if $z$ acts nontrivially.
\end{lemma}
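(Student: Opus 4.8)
The plan is to prove the two implications separately: the forward direction is immediate, and the reverse direction rests on the standard fact that a nonzero graded ideal of a nilpotent Lie superalgebra meets its center.

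If $\lambda$ is faithful then $\lambda(z)\neq 0$ because $z\neq 0$; this settles the easy direction. For the converse, assume $\lambda(z)\neq 0$ and set $I=\ker\lambda$. Since $\lambda$ is a homomorphism of Lie superalgebras, $\lambda$ preserves the $\mathbb{Z}_2$-grading, so $I$ is a \emph{graded} ideal of $L$, and I would argue by contradiction: suppose $I\neq 0$.

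Next I would invoke Engel's theorem in the super setting. Because $L$ is nilpotent, its lower central series terminates, so there is an $N$ with $(\mathrm{ad}\,x)^{N}=0$ on $L$ for every $x\in L$; in particular each $\mathrm{ad}\,x$ restricts to a nilpotent operator on the graded subspace $I$. Thus $I$, regarded as an $L$-module via the adjoint action, is a nonzero graded module on which every element of $L$ acts nilpotently. The super analogue of Engel's theorem then produces a nonzero homogeneous $v\in I$ with $[L,v]=0$, i.e. $v\in Z(L)=\mathbb{F}z$. Hence $v\in \mathbb{F}z\cap I$ with $v\neq 0$, which forces $z\in I=\ker\lambda$, contradicting $\lambda(z)\neq 0$. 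Therefore $\ker\lambda=0$ and $\lambda$ is faithful.

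The only point that needs a word of care is the application of the super-Engel theorem to $I$: one must check that $I$ is genuinely $\mathbb{Z}_2$-graded (which holds since $\lambda$ is a morphism of Lie superalgebras) and that nilpotency of $L$ gives, uniformly in $x$, the nilpotency of $\mathrm{ad}\,x$ on $I$. Both are routine, so there is no substantial obstacle; the argument is the verbatim transcription of the Lie algebra statement \cite[Lemma 1]{DB}, using only that Engel's theorem is available for Lie superalgebras.
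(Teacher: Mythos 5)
Your argument is correct and follows the paper's proof essentially verbatim: the easy direction is immediate, and for the converse you apply the super Engel theorem to the graded ideal $\ker\lambda$ to extract a nonzero central element, which must be a multiple of $z$, contradicting $\lambda(z)\neq 0$. The paper's proof is just a terser version of the same argument, so nothing further is needed.
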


\begin{proof} The ``only if" part is obvious. Suppose $z$ acts nontrivially.
If $\ker(\lambda)\neq 0$, then Engel's theorem ensures that $\ker(\lambda)$ contains a nonzero element killed by $L$ and hence $\ker(\lambda)$ contains the center $\mathbb{F}z$, showing that $\rho(z)=0,$
  a contradiction.
\end{proof}

Let
$$
\zeta(\frak{g})=\max\{\dim \frak{a}\mid \mbox{$\frak{a}$ is an abelian subalgebra of $\frak{g}$ not containing the center of $\frak{g}$}\}.
$$

 Let $\sqrt{-1}$ denote a fixed root of the equation $x^{2}=-1$ in $\mathbb{F}$. We have
\begin{lemma}\label{lem2}
Let $\mathfrak{a}$ be an abelian subalgebra not containing $z$ of $\mathfrak{g}$ and having dimension $\zeta(\frak{g})$.
Then
 \begin{itemize}
 \item for $\mathfrak{g}=\mathfrak{h}_{m, n}$, the super-dimension $(\dim \mathfrak{a}_{\bar{0}},\dim \mathfrak{a}_{\bar{1}})$ must be $(m,\lfloor n/2 \rfloor);$
 \item for $\mathfrak{g}=\mathfrak{h}_{n}$, the super-dimension $(\dim \mathfrak{a}_{\bar{0}},\dim \mathfrak{a}_{\bar{1}})$ has $n+1$ possibilities:
 $$ (i,n-i),\quad i=0,  \ldots, n.$$
 \end{itemize}
 In particular,
\begin{eqnarray*}
\zeta(\frak{g})=\left\{\begin{array}{ll}
m+\lfloor  n/2 \rfloor   & \frak{g}=\frak{h}_{m,n}\\
n &\frak{g}=\frak{h}_{n}.
\end{array}\right.
\end{eqnarray*}
\end{lemma}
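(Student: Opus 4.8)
The plan is to exploit the $\mathbb{Z}_2$-grading. A subalgebra $\mathfrak{a}$ of $\mathfrak{g}$ is graded, so $\mathfrak{a}=\mathfrak{a}_{\bar 0}\oplus\mathfrak{a}_{\bar 1}$, and the condition that $\mathfrak{a}$ be abelian splits into $[\mathfrak{a}_{\bar 0},\mathfrak{a}_{\bar 0}]=0$, $[\mathfrak{a}_{\bar 1},\mathfrak{a}_{\bar 1}]=0$ and $[\mathfrak{a}_{\bar 0},\mathfrak{a}_{\bar 1}]=0$. Since in both $\mathfrak{h}_{m,n}$ and $\mathfrak{h}_n$ the bracket takes values in $\mathbb{F}z$, each of these conditions is the vanishing of a bilinear (or quadratic) form. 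First I would dispose of the possibility that $z$ is ``hidden'' inside $\mathfrak{a}$: let $p$ be the projection of $\mathfrak{g}$ onto the span of the non-central basis vectors with kernel $\mathbb{F}z$. Because $z\notin\mathfrak{a}$, the restriction of $p$ to each homogeneous component of $\mathfrak{a}$ is injective, so dimensions may be computed after applying $p$, and the bracket descends to a form on the image.

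For $\mathfrak{g}=\mathfrak{h}_{m,n}$ the even part is the classical Heisenberg algebra on $u_i,v_i,z$, and $p(\mathfrak{a}_{\bar 0})$ is a totally isotropic subspace of $\operatorname{span}(u_i,v_i)$ for the standard symplectic form $\omega$ determined by $[x,y]=\omega(x,y)z$; hence $\dim\mathfrak{a}_{\bar 0}=\dim p(\mathfrak{a}_{\bar 0})\le m$, with the Lagrangian $\operatorname{span}(u_1,\dots,u_m)$ attaining the bound. On the odd part, $[w,w']=B(w,w')z$ where $B$ is the nondegenerate symmetric form with Gram matrix $I_n$ in the basis $w_1,\dots,w_n$, and $[w,w]=0$ for all $w\in\mathfrak{a}_{\bar 1}$ forces $\mathfrak{a}_{\bar 1}$ to be totally isotropic for $B$; over an algebraically closed field its Witt index is $\lfloor n/2\rfloor$, a maximal totally isotropic subspace being spanned by the vectors $w_{2\ell-1}+\sqrt{-1}\,w_{2\ell}$. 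The cross term $[\mathfrak{a}_{\bar 0},\mathfrak{a}_{\bar 1}]$ vanishes automatically since $[\mathfrak{h}_{m,0},\operatorname{span}(w_j)]=0$. Adding up, $\dim\mathfrak{a}\le m+\lfloor n/2\rfloor$, with equality forcing $\dim\mathfrak{a}_{\bar 0}=m$ and $\dim\mathfrak{a}_{\bar 1}=\lfloor n/2\rfloor$; the explicit subalgebra just described attains it, which proves both the super-dimension claim and $\zeta(\mathfrak{h}_{m,n})=m+\lfloor n/2\rfloor$.

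For $\mathfrak{g}=\mathfrak{h}_n$ all brackets among the odd basis vectors $z,w_1,\dots,w_n$ vanish, so $[\mathfrak{a}_{\bar 1},\mathfrak{a}_{\bar 1}]=0$ and $[\mathfrak{a}_{\bar 0},\mathfrak{a}_{\bar 0}]=0$ impose nothing, and the only constraint is $[\mathfrak{a}_{\bar 0},\mathfrak{a}_{\bar 1}]=0$. Writing $W'$ for the image of $\mathfrak{a}_{\bar 1}$ under the projection $\operatorname{span}(z,w_j)\to\operatorname{span}(w_j)$, injectivity (from $z\notin\mathfrak{a}$) gives $\dim\mathfrak{a}_{\bar 1}=\dim W'$, and a short computation shows the cross condition says exactly that $\mathfrak{a}_{\bar 0}\subseteq\operatorname{span}(v_i)$ and $W'\subseteq\operatorname{span}(w_j)$ are orthogonal under the pairing $[v_i,w_j]=\delta_{ij}z$. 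Hence $\dim\mathfrak{a}=\dim\mathfrak{a}_{\bar 0}+\dim W'\le n$. Conversely, for each $i\in\{0,\dots,n\}$ the subspace $\operatorname{span}(v_1,\dots,v_i)\oplus\operatorname{span}(w_{i+1},\dots,w_n)$ is an abelian subalgebra avoiding $z$, of dimension $n$ and super-dimension $(i,n-i)$; this realizes all $n+1$ listed possibilities and gives $\zeta(\mathfrak{h}_n)=n$.

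The routine parts are the bracket identities and the orthogonality bookkeeping; the only real content is the two structural facts about maximal isotropic subspaces — dimension $m$ for a rank-$2m$ symplectic form and dimension $\lfloor n/2\rfloor$ for a rank-$n$ nondegenerate symmetric form over an algebraically closed field — together with the reduction that replaces $\mathfrak{a}$ by its image under the ``forget $z$'' projection. I expect the step needing the most care to be the symmetric-form bound and the matching explicit totally isotropic subspace, since this is exactly where algebraic closedness (the element $\sqrt{-1}$ fixed just before the lemma) is genuinely used; everything else is elementary linear algebra on $\operatorname{span}(u_i,v_i)$ and the pairing of the $v$'s with the $w$'s.
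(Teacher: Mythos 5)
Your proof is correct and follows essentially the same route as the paper: pass to the complement of $\mathbb{F}z$, interpret the bracket as a non-degenerate form, bound the isotropic subspaces component-wise (symplectic on the even part, symmetric of Witt index $\lfloor n/2\rfloor$ on the odd part, and the $v$--$w$ pairing for $\mathfrak{h}_n$), and exhibit the same explicit maximal abelian subalgebras using $w_{2\ell-1}+\sqrt{-1}\,w_{2\ell}$. The only cosmetic difference is that the paper first derives the total bound $\dim\mathfrak{a}\le\tfrac{1}{2}(\dim\mathfrak{g}-1)$ from the single form $B$ on $\mathfrak{k}$ and then refines by restricting to the homogeneous components, whereas you split into the graded pieces from the outset.
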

\begin{proof}
Since  $\mathfrak{a}$  does not contain the center $\mathbb{F}z$, there is a $\mathbb{Z}_{2}$-graded subspace $\frak{k}$ containing $\frak{a}$ such that
  $\mathfrak{g}=\mathfrak{k}\oplus \mathbb{F}z$.
  Let $B:\mathfrak{k}\times \mathfrak{k}\rightarrow \mathbb{F}$ be the  form determined  by $[x, y]=B(x, y)z$ for all $x, y\in \mathfrak{k}$.
It is clear that $B$ is bilinear and  non-degenerate.
Since $\mathfrak{a}$ is abelian,
  $B(x, y)=0$  for all $x, y\in \mathfrak{a}$.
Therefore, $\mathfrak{a}$ is a $B$-isotropic subspace of $\mathfrak{k}$. It follows that
$\dim \mathfrak{a}\leq \frac{\dim \mathfrak{k}}{2}=\frac{\dim \mathfrak{g}-1}{2}$.

Suppose $\mathfrak{g}=\frak{h}_{m,n}.$ Then $\dim \mathfrak{a}\leq m+\lfloor n/2 \rfloor$. Let $\mathfrak{b}$ be the subspace spanned by
$$ u_{1},u_{2},\ldots,u_{m},w_{1}+\sqrt{-1}w_{2},w_{3}+\sqrt{-1}w_{4},\ldots,w_{n-1}+\sqrt{-1}w_{n}$$ if $n$ is even
and by
$$u_{1},u_{2},\ldots,u_{m},w_{1}+\sqrt{-1}w_{2},w_{3}+\sqrt{-1}w_{4},\ldots,w_{n-2}+\sqrt{-1}w_{n-1}$$ if $n$ is odd.
One can check that $\mathfrak{b}$ is an abelian subalgebra of dimension $m+\lfloor n/2 \rfloor$ and $\mathfrak{b}$ does not contain $z$.
Hence, $\zeta(\frak{g})=\dim \mathfrak{a}=m+\lfloor n/2 \rfloor$.

Clearly,
$\mathfrak{a}_{\bar{0}}$ is a $B$-isotropic subspace of $\mathfrak{k}_{\bar{0}}$ and
$\mathfrak{a}_{\bar{1}}$ is a $B$-isotropic subspace of $\mathfrak{k}_{\bar{1}}$.
Since $B|_{\mathfrak{k}_{\bar{0}}\times \mathfrak{k}_{\bar{0}}}$ and $B|_{\mathfrak{k}_{\bar{1}}\times \mathfrak{k}_{\bar{1}}}$ are non-degenerate, we have $\dim \mathfrak{a}_{\bar{0}}\leq m$,
$\dim \mathfrak{a}_{\bar{1}}\leq \lfloor n/2 \rfloor$.
Note that $\dim \mathfrak{a}=m+\lfloor n/2 \rfloor$. It follows that $\dim \mathfrak{a}_{\bar{0}}=m$, $\dim \mathfrak{a}_{\bar{1}}=\lfloor n/2 \rfloor$.

Suppose $\mathfrak{g}=\frak{h}_{n}.$ Then $\dim \mathfrak{a}\leq n$.
Let $\frak{b}'$ be the subspace spanned by
$v_{1},v_{2},\ldots,v_{n}.$
Clearly, $\frak{b}'$ is an abelian subalgebra of dimension $n$ of $\mathfrak{h}_{n}$ and $\frak{b}'$ does not contain $z$.
Hence, $\zeta(\frak{g})=\dim \mathfrak{a}=n.$
From the definition of  $\mathfrak{h}_{n}$, one may easily find abelian subalgebras not containing $z$ and having the indicated super-dimension $(i,n-i)$  with
$ i=0,  \ldots, n.$
\end{proof}

\begin{lemma}\label{lem2047}
Let $V$ be a faithful $\mathfrak{g}$-module. Then there exists a nonzero homogeneous element $v_{0}$ in $V$ such that $zv_{0}\neq0$. Moreover, let $\rho_{v_{0}}$ be the linear mapping defined by
\begin{eqnarray*}
\rho_{v_{0}}: \mathfrak{g} \longrightarrow V,\quad  x\longmapsto xv_{0}
\end{eqnarray*}
and let $\mathfrak{a}=\ker(\rho_{v_{0}})$ and $V_0=\mathrm{im}(\rho_{v_{0}})$.
Then $\mathfrak{a}$ is an abelian subalgebra not containing $z$ and
if $\dim \mathfrak{a}=\zeta(\frak{g})$, then $v_{0}\notin V_0$.
\end{lemma}

\begin{proof}  Lemma \ref{lem1} ensures  that there exists a nonzero homogeneous element $v_{0}$ in $V$ such that $zv_{0}\neq0$. It follows that $\mathfrak{a}$ does not contain $z$.
Since $\rho_{v_{0}}$ is homogenous, $\mathfrak{a}$ is a $\mathbb{Z}_{2}$-graded subspace of $\mathfrak{g}$.
 For  $x, y\in \mathfrak{a}$, it is obvious that
$[x, y]\in \mathfrak{a}\cap  \mathbb{F}z=0$  and it follows   that $\mathfrak{a}$ is an abelian subalgebra.

 Suppose $\dim \mathfrak{a}=\zeta(\frak{g})$. Assume in contrary that $v_{0}\in V_0$. Then there exists an $x\in \mathfrak{g}_{\bar{0}}$ such that $xv_{0}=v_{0}$,
since $v_{0}$ is a nonzero homogeneous element of $V$.
Clearly, $(\mathfrak{h}_{m, n})_{\bar{0}}$ is a solvable Lie algebra.
Since $[u_{i}, v_{i}]=z$,
by Lie's theorem, $z$ acts nilpotently on $V$. For $\mathfrak{h}_{n}$, $z$ is odd.
Therefore, $x\notin \mathbb{F}z$. Moreover, it is clear that $x\notin \frak{a}$.
Then by the maximality of $\mathfrak{a}$, we have $[x, \mathfrak{a}]\not=0$. There must be some $y\in \mathfrak{a}$ such that $[x, y]=z$.
Since  $x\in \mathfrak{g}_{\bar{0}}$, we have
\begin{equation*}\label{eq21}
zv_{0}=[x, y]v_{0}
=x(yv_{0})-y(xv_{0})=0,
\end{equation*}
using that $yv_{0}=0$ and $xv_{0}=v_{0}$.
This is a contradiction. Hence $v_{0}\notin V_0$.
\end{proof}

\begin{proposition}\label{proposition2}
Let $\frak{g}=\mathfrak{h}_{m, n}$ or $\mathfrak{h}_{n}$. Then
\begin{eqnarray*}
\mu(\frak{g})\geq \dim \mathfrak{g}-\zeta(\frak{g})+1.
\end{eqnarray*}
That is,
\begin{itemize}
\item
 $\mu(\mathfrak{h}_{m, n})\geq m+\lceil n/2 \rceil+2;$
 \item $\mu(\mathfrak{h}_{n})\geq n+2.$
 \end{itemize}
\end{proposition}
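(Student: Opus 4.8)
The plan is to take an arbitrary faithful $\frak{g}$-module $V$ and produce a lower bound on $\dim V$ by exploiting the homogeneous element $v_0$ with $zv_0\neq 0$ supplied by Lemma~\ref{lem2047}. Applying that lemma, we get the decomposition $\frak{g}=\frak{a}\oplus\frak{a}'$ with $\frak{a}=\ker(\rho_{v_0})$ abelian and not containing $z$, so $\dim\frak{a}\leq\zeta(\frak{g})$ and hence $t:=\dim\frak{a}'=\dim\frak{g}-\dim\frak{a}\geq \dim\frak{g}-\zeta(\frak{g})$. By part~(4) of Lemma~\ref{lem2047}, the images $x_1v_0,\dots,x_tv_0$ of a homogeneous basis of $\frak{a}'$ are linearly independent in $V$, so $\dim V\geq t\geq \dim\frak{g}-\zeta(\frak{g})$. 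To upgrade the bound by one, I would argue that $v_0$ itself is not in the span of $\{x_iv_0\mid i=1,\dots,t\}=\mathrm{im}(\rho_{v_0})=\frak{b}$: if $\dim\frak{a}=\zeta(\frak{g})$ this is exactly part~(2) of Lemma~\ref{lem2047}; otherwise $\dim\frak{a}<\zeta(\frak{g})$ forces $t>\dim\frak{g}-\zeta(\frak{g})$, i.e. $t\geq\dim\frak{g}-\zeta(\frak{g})+1$, and we are already done from $\dim V\geq t$ without needing $v_0\notin\frak{b}$. In the former case, adjoining $v_0$ to the linearly independent set gives $\dim V\geq t+1=\dim\frak{g}-\zeta(\frak{g})+1$.

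Combining the two cases yields $\mu(\frak{g})\geq \dim\frak{g}-\zeta(\frak{g})+1$ for every faithful $V$, hence for $\mu(\frak{g})$. The two bulleted consequences are then pure bookkeeping: $\dim\frak{h}_{m,n}=2m+n+1$ and $\zeta(\frak{h}_{m,n})=m+\lfloor n/2\rfloor$ by Lemma~\ref{lem2}, so $\dim\frak{g}-\zeta(\frak{g})+1=2m+n+1-m-\lfloor n/2\rfloor+1=m+n-\lfloor n/2\rfloor+2=m+\lceil n/2\rceil+2$; similarly $\dim\frak{h}_n=2n+1$ and $\zeta(\frak{h}_n)=n$ give $\dim\frak{g}-\zeta(\frak{g})+1=n+2$.

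The one point that needs a little care—and which I'd flag as the main (minor) obstacle—is making sure the "$+1$" is honestly earned in the borderline case $\dim\frak{a}=\zeta(\frak{g})$ where $v_0\notin\frak{b}$ is doing real work; here one must invoke Lemma~\ref{lem2047}(2) rather than any naive dimension count, and one should double-check that $v_0$ is genuinely nonzero and that $\frak{b}$ is spanned precisely by the $x_iv_0$ (which follows since $\frak{g}=\frak{a}\oplus\frak{a}'$ and $\frak{a}=\ker\rho_{v_0}$, so $\rho_{v_0}$ restricted to $\frak{a}'$ is an isomorphism onto $\frak{b}$). No deep ideas are required beyond Lemmas~\ref{lem1}, \ref{lem2}, and \ref{lem2047}; the proposition is essentially an assembly of those three inputs.
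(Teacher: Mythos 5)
Your proposal is correct and follows essentially the same route as the paper's proof: both use the decomposition $\mathfrak{g}=\mathfrak{a}\oplus\mathfrak{a}'$ from Lemma \ref{lem2047}, the bound $\dim\mathfrak{a}\leq\zeta(\mathfrak{g})$ from Lemma \ref{lem2}, and Lemma \ref{lem2047}(2) to adjoin $v_{0}$ in the borderline case $\dim\mathfrak{a}=\zeta(\mathfrak{g})$. The only cosmetic difference is that you count via part (4) of Lemma \ref{lem2047} where the paper counts $\dim\mathfrak{b}=\dim\mathfrak{g}-\dim\mathfrak{a}$ directly; these are the same observation.
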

\begin{proof}
Assume that $\lambda: \mathfrak{g}\rightarrow \mathfrak{gl}(V)$ is a faithful representation. Let $v_{0}, \mathfrak{a}, V_0$ be as in Lemma \ref{lem2047}.
By Lemmas \ref{lem2} and \ref{lem2047}, we have
$$
\dim V\geq \dim V_0=\dim \mathfrak{g}-\dim \mathfrak{a}\geq \dim \mathfrak{g}-\zeta(\frak{g}).
$$
 If $\dim V_0\geq \dim \mathfrak{g}-\zeta(\frak{g})+1$, we are done.
Suppose $\dim V_0= \dim \mathfrak{g}-\zeta(\frak{g}).$ Then $\dim \mathfrak{a}=\zeta(\frak{g})$.
By Lemma \ref{lem2047}, we have $v_{0}\notin V_0$.
Therefore,
$$\dim V\geq \dim V_0+1= \dim \mathfrak{g}-\zeta(\frak{g})+1.$$
That is,
 $\mu(\mathfrak{h}_{m, n})\geq m+\lceil n/2 \rceil+2;$
 $\mu(\mathfrak{h}_{n})\geq n+2.$
\end{proof}

\begin{theorem}\label{theorem1} We have
\begin{eqnarray*}
\mu(\frak{g})=\left\{\begin{array}{ll}
  m+\lceil n/2\rceil+2 & \frak{g}=\frak{h}_{m,n}\\
n+2 &\frak{g}=\frak{h}_{n}.
\end{array}\right.
\end{eqnarray*}
\end{theorem}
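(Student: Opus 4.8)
By Proposition \ref{proposition2} the stated values are lower bounds for $\mu(\mathfrak{g})$, so the plan is to construct, for each of $\mathfrak{h}_{m,n}$ and $\mathfrak{h}_{n}$, a faithful module whose dimension equals that lower bound. By Lemma \ref{lem1} it is enough to exhibit a module of the right size on which $z$ acts nontrivially, and I would do this by writing the module down explicitly, imitating Burde's construction for the Heisenberg Lie algebra and adding the extra bookkeeping forced by the odd generators.

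For $\mathfrak{h}_{n}$ I would let $V$ be the $(n+2)$-dimensional superspace with even basis vector $e_{0}$ and odd basis vectors $h,g_{1},\ldots,g_{n}$, and set
$$w_{i}e_{0}=g_{i},\qquad ze_{0}=h,\qquad v_{i}g_{i}=h\qquad(1\le i\le n),$$
declaring every other product of a basis element of $\mathfrak{h}_{n}$ with a basis vector of $V$ to be $0$. Then I would check directly that $\lambda([x,y])=\lambda(x)\lambda(y)-(-1)^{|x||y|}\lambda(y)\lambda(x)$ holds on all pairs of basis elements; the only instance that is not trivially $0=0$ is $[v_{i},w_{i}]=z$, which reduces to $v_{i}(w_{i}e_{0})=ze_{0}$. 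Since $ze_{0}=h\ne 0$, Lemma \ref{lem1} makes $V$ faithful, and $\dim V=n+2$ as required.

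For $\mathfrak{h}_{m,n}$, write $n=2k+\epsilon$ with $\epsilon\in\{0,1\}$. I would let $V$ have even basis vectors $e_{0},e_{1},\ldots,e_{m},f$ and odd basis vectors $g_{1},\ldots,g_{k}$, plus one further odd vector $g_{k+1}$ when $\epsilon=1$, so that $\dim V=m+k+2+\epsilon=m+\lceil n/2\rceil+2$. The action would be
$$v_{i}e_{0}=e_{i},\qquad u_{i}e_{i}=f,\qquad ze_{0}=f\qquad(1\le i\le m),$$
$$w_{2l-1}e_{0}=g_{l},\quad w_{2l}e_{0}=\sqrt{-1}\,g_{l},\quad w_{2l-1}g_{l}=\tfrac12 f,\quad w_{2l}g_{l}=-\tfrac{\sqrt{-1}}{2}f\qquad(1\le l\le k),$$
together with $w_{n}e_{0}=g_{k+1}$, $w_{n}g_{k+1}=\tfrac12 f$ when $\epsilon=1$, and all remaining products of basis vectors equal to $0$. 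Verifying that this is a representation then comes down to three patterns: $[u_{i},v_{i}]=z$ becomes $u_{i}(v_{i}e_{0})=ze_{0}$; $[w_{j},w_{j}]=z$ becomes $2\,w_{j}(w_{j}e_{0})=ze_{0}$, which is why the scalars $\tfrac12$ appear; and $[w_{2l-1},w_{2l}]=0$ becomes $w_{2l-1}(w_{2l}e_{0})+w_{2l}(w_{2l-1}e_{0})=0$, which (together with $[w_{2l},w_{2l}]=z$) forces the coefficients $\sqrt{-1}$ and $-\sqrt{-1}/2$, using $2\sqrt{-1}\cdot(-\sqrt{-1}/2)=1$. Every other bracket of $\mathfrak{h}_{m,n}$ is seen to act as $0$ on $V$ by inspection. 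Since $ze_{0}=f\ne 0$, $V$ is faithful by Lemma \ref{lem1}, and its dimension meets the bound of Proposition \ref{proposition2}, which finishes the theorem.

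The hard part is not the strategy but getting the constants right in this last verification: one has to track the factor $2$ coming from $[w_{j},w_{j}]=2w_{j}^{2}$ in the super setting and to introduce the scalars $\sqrt{-1}$ precisely so as to break the symmetric form on the odd part into hyperbolic pairs, so that $[w_{2l-1},w_{2l-1}]=z$, $[w_{2l},w_{2l}]=z$ and $[w_{2l-1},w_{2l}]=0$ can all be matched at once; beyond that the module identities are a mechanical check on basis vectors.
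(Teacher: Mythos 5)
Your proposal is correct and follows essentially the same route as the paper: the lower bound is quoted from Proposition \ref{proposition2}, and the upper bound is obtained by writing down an explicit faithful module of the matching dimension, with $z$ acting nontrivially so that Lemma \ref{lem1} gives faithfulness. Your basis-vector actions are, up to a parity swap for $\mathfrak{h}_{n}$ and a rescaling of the odd basis vectors, exactly the matrix representations $\pi$ and $\pi'$ of the paper (including the same $\tfrac12$ and $\sqrt{-1}$ coefficients needed to reconcile $[w_{2l-1},w_{2l-1}]=[w_{2l},w_{2l}]=z$ with $[w_{2l-1},w_{2l}]=0$).
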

\begin{proof}
By Proposition \ref{proposition2}, it is enough to establish a faithful representation of the desired dimension for $\frak{g}$.
Consider the even linear mapping
$$\pi: \mathfrak{h}_{m, n}\longrightarrow \mathfrak{gl}(m+2\mid\lceil n/2 \rceil)$$
given by
\begin{eqnarray*}
&&\pi(u_{i})=e_{1,i+1},\quad
 \pi(v_{i})=e_{i+1,m+2},\quad \pi(z)=e_{1,m+2},\\
&&\pi(w_{2k-1})=\frac{1}{2}e_{m+2+k,m+2}+ e_{1,m+2+k},\\
&&\pi(w_{2k})=\frac{\sqrt{-1}}{2}e_{m+2+k,m+2}- \sqrt{-1}e_{1,m+2+k},
\end{eqnarray*}
where $1\leq i\leq m, 1\leq 2k, 2k-1\leq n.$
Under $\pi$,
 an element of $\frak{h}_{m,n},$
\begin{eqnarray}\label{eq1013w}
\sum^{m}_{i=1}a_{i}u_{i}+\sum^{m}_{i=1}b_{i}v_{i}+cz+\sum^{n}_{j=1}d_{j}w_{j}\quad (\mbox{$a_{i}, b_{i}, c, d_{j}\in \mathbb{F}$})
\end{eqnarray}
 is  presented as
\begin{equation}\label{eq12}
\left(\begin{array}{cccccc|cccc}
0&a_{1}&a_{2}&\cdots&a_{m}&c                   &d_{1,2}&d_{3,4}&\cdots&d_{n-1,n}\\
 &     &     &      &     &b_{1}               &                                           \\
 &     &     &      &     &b_{2}               &                                            \\
 &     &     &      &     &\vdots              &      \\
 &     &     &      &     &b_{m}               &        \\
 &     &     &      &     &0                   &              \\
 \hline
 &     &     &      &     &\widetilde{d}_{1,2}  &        \\
 &     &     &      &     &\widetilde{d}_{3,4}  &              \\
 &     &     &      &     &\vdots              &      \\
 &     &     &      &     &\widetilde{d}_{n-1,n}&              \\
 \end{array}\right)\quad (\mbox{$n$ even})
 \end{equation}
 or
\begin{equation}\label{eq13}
\left(\begin{array}{cccccc|ccccc}
0&a_{1}&a_{2}&\cdots&a_{m}&c&d_{1,2}&d_{3,4}&\cdots&d_{n-2,n-1}&d_{n}\\
 &     &     &      &     &b_{1}&                                           \\
 &     &     &      &     &b_{2}&                                            \\
 &     &     &      &     &\vdots&      \\
 &     &     &      &     &b_{m}&        \\
 &     &     &      &     &0&              \\\hline
 &     &     &      &   &\widetilde{d}_{1,2}&        \\
 &     &     &      &   &\widetilde{d}_{3,4}&              \\
 &     &     &      &     &\vdots&      \\
 &     &     &      &   &\widetilde{d}_{n-2,n-1}&              \\
 &     &     &      &   &\frac{1}{2}d_{n}&              \\
                           \end{array}\right)\quad (\mbox{$n$  odd}),
 \end{equation}
where $d_{i,i+1}=d_{i}-\sqrt{-1}d_{i+1}$, $\widetilde{d}_{i,i+1}=\frac{1}{2}(d_{i}+\sqrt{-1}d_{i+1}).$
It is routine to verify that $\pi$ is a faithful representation of dimension $ m+\lceil n/2\rceil+2$.

Let us consider the even linear mapping
$$\pi':\mathfrak{h}_{n}\longrightarrow \mathfrak{gl}(n+1\mid 1)$$
given by
\begin{eqnarray*}
&&\pi'(v_{i})=e_{1,i+1},\\
&&\pi'(z)=e_{1,n+2},\quad
 \pi'(w_{i})=e_{i+1,n+2},
\end{eqnarray*}
where $1\leq i\leq n.$
Under $\pi'$,
an element of $\frak{h}_{n}$,
\begin{eqnarray}\label{eq1057w}
\sum^{n}_{i=1}a_{i}v_{i}+cz+\sum^{n}_{i=1}b_{i}w_{i}\quad (\mbox{$a_{i}, c, b_{i}\in \mathbb{F}$})
\end{eqnarray}
  is presented as
\begin{equation*}\label{eq14}
\left(\begin{array}{ccccc|c}
0&a_{1}&a_{2}&\cdots&a_{n}&c\\
 &     &     &      &     &b_{1}\\
 &     &     &      &     &b_{2}\\
 &     &     &      &     &\vdots       \\
 &     &     &      &     &b_{n}        \\ \hline
 &     &     &      &     &0            \\
                           \end{array}\right).
                           \end{equation*}
It is routine to verify that $\pi'$ is a faithful representation of dimension $n+2$.
\end{proof}

\section{Super-dimensions}

In this section  we discuss the super-dimensions of the faithful representations for Heisenberg Lie superalgebras. We first establish a technical lemma, for which we shall use a result due to Burde  \cite{DB}: the formula $\mu(L)$ for Heisenberg Lie algebras.
\begin{lemma}\label{proposition5}
Let $V$ be a faithful module of  $\mathfrak{h}_{m,n}$. Let $v_0$ be as in Lemma \ref{lem2047}.
If $v_{0}$ is even, then $\dim V_{\bar{0}}\geq m+2$; if $v_{0}$ is odd, then $\dim V_{\bar{1}}\geq m+2$.
\end{lemma}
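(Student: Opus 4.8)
The plan is to restrict everything in sight to the even part of $\mathfrak{h}_{m,n}$ and to a single homogeneous component of $V$, so that Burde's equality $\mu(\mathfrak{h}_m)=m+2$ for the $(2m+1)$-dimensional Heisenberg Lie algebra $\mathfrak{h}_m$ can be invoked directly.

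First I would identify $(\mathfrak{h}_{m,n})_{\bar 0}$, which has basis $(u_1,\dots,u_m,v_1,\dots,v_m;z)$ and whose only nonzero brackets are $[u_i,v_i]=z$, with the Heisenberg Lie algebra $\mathfrak{h}_m$ of dimension $2m+1$ and $1$-dimensional center $\mathbb{F}z$. Since the super bracket restricted to the even part is an ordinary Lie bracket, and since every even operator on $V$ preserves $V_{\bar 0}$ and $V_{\bar 1}$ separately, restriction turns each of $V_{\bar 0}$ and $V_{\bar 1}$ into a module over the Lie algebra $\mathfrak{h}_m$.

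Next I would use the parity of $v_0$ to locate the nonzero vector $zv_0$ inside one of these components. If $v_0$ is even, then $zv_0$ is even and nonzero (by the choice of $v_0$ in Lemma \ref{lem2047}), so $z$ acts nontrivially on the $\mathfrak{h}_m$-module $V_{\bar 0}$; by Lemma \ref{lem1} in its Lie-algebra form (\cite[Lemma 1]{DB}) applied to the nilpotent Lie algebra $\mathfrak{h}_m$, the module $V_{\bar 0}$ is then faithful, whence $\dim V_{\bar 0}\geq \mu(\mathfrak{h}_m)=m+2$. If instead $v_0$ is odd, the same argument with $zv_0\in V_{\bar 1}\setminus\{0\}$ shows that $V_{\bar 1}$ is a faithful $\mathfrak{h}_m$-module, so $\dim V_{\bar 1}\geq m+2$.

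There is no real obstacle here: the only step that deserves care is the verification that the restricted action of $\mathfrak{h}_m$ on the relevant homogeneous component is still faithful, and this is exactly what the parity of $v_0$ together with $zv_0\neq 0$ provides, so Burde's lower bound transfers verbatim.
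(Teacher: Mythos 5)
Your proof is correct and follows essentially the same route as the paper: restrict to the even part $(\mathfrak{h}_{m,n})_{\bar{0}}\cong\mathfrak{h}_m$, observe that the homogeneous component containing $v_0$ is an $\mathfrak{h}_m$-module on which $z$ acts nontrivially (hence faithful by the Lie-algebra version of Lemma \ref{lem1}), and invoke Burde's bound $\mu(\mathfrak{h}_m)=m+2$. No issues.
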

\begin{proof}Note that $(\mathfrak{h}_{m, n})_{\bar{0}}$ is a Heisenberg Lie algebra.
Obviously, $V_{\bar{0}}$ is a module of the Lie algebra $(\mathfrak{h}_{m, n})_{\bar{0}}$.
If $v_{0}$ is even, then $v_{0}\in V_{\bar{0}}$. Since $zv_{0}\neq 0$,
$V_{\bar{0}}$ is a faithful module of $(\mathfrak{h}_{m, n})_{\bar{0}}$ by Lemma \ref{lem1}. According to the minimal dimensions of  faithful representations for Heisenberg Lie algebras \cite{DB}, we have
$\dim V_{\bar{0}}\geq \mu((\mathfrak{h}_{m, n})_{\bar{0}})=m+2$.
Similarly, if $v_{0}$ is odd, then $V_{\bar{1}}$ is a faithful module of $(\mathfrak{h}_{m, n})_{\bar{0}}$ and hence $\dim V_{\bar{1}}\geq m+2$.
\end{proof}

\begin{theorem}\label{corollary4}
Suppose $V$ is a faithful $\mathfrak{g}$-module of the minimal dimension $\mu(\mathfrak{g})$. Then
 \begin{itemize}
 \item For $\mathfrak{h}_{m, n}$, the super-dimension $(\dim V_{\bar{0}},\dim V_{\bar{1}})$ has $2$ possibilities:
 $$\mbox{$(m+2,\lceil n/2 \rceil)$, $(\lceil n/2 \rceil,m+2)$;}$$
 \item For $\mathfrak{h}_{n}$, the super-dimension $(\dim V_{\bar{0}},\dim V_{\bar{1}})$ has $n+1$ possibilities:
 $$ (i+1,n-i+1),\quad i=0,  \ldots, n.$$
 \end{itemize}
\end{theorem}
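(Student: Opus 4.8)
The plan is to prove the stated characterizations in two parts: that every faithful module of the minimal dimension has one of the listed super-dimensions, and conversely that each listed super-dimension is actually attained.

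\emph{No super-dimensions other than those listed occur.} Let $V$ be a faithful $\mathfrak g$-module with $\dim V=\mu(\mathfrak g)$. For $\mathfrak g=\mathfrak h_n$ this is immediate: since $z$ is odd and, by Lemma \ref{lem1}, acts nontrivially, a homogeneous $v_0$ with $zv_0\neq 0$ shows that both $V_{\bar 0}$ and $V_{\bar 1}$ are nonzero, so writing $\dim V_{\bar 0}=i+1$ forces $0\le i\le n$ and $\dim V_{\bar 1}=n-i+1$. For $\mathfrak g=\mathfrak h_{m,n}$ I take $v_0$, $\mathfrak a=\ker\rho_{v_0}$, $\mathfrak b=\operatorname{im}\rho_{v_0}$ as in Lemma \ref{lem2047}. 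Since $\mathfrak a$ is abelian and does not contain $z$, $\dim\mathfrak a\le\zeta(\mathfrak g)$; combining $\dim V\ge\dim\mathfrak b=\dim\mathfrak g-\dim\mathfrak a$ with $\mu(\mathfrak g)=\dim\mathfrak g-\zeta(\mathfrak g)+1$ (Theorem \ref{theorem1} and Lemma \ref{lem2}) gives $\dim\mathfrak a\ge\zeta(\mathfrak g)-1$, hence $\dim\mathfrak a\in\{\zeta(\mathfrak g)-1,\zeta(\mathfrak g)\}$. I treat the case $v_0$ even; the case $v_0$ odd is handled symmetrically (or by passing to the parity reversal, see below). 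By Lemma \ref{proposition5}, $\dim V_{\bar 0}\ge m+2$. If $\dim\mathfrak a=\zeta(\mathfrak g)$, then Lemma \ref{lem2} gives $(\dim\mathfrak a_{\bar 0},\dim\mathfrak a_{\bar 1})=(m,\lfloor n/2\rfloor)$ and Lemma \ref{lem2047}(2) gives $v_0\notin\mathfrak b$, so $V=\mathfrak b\oplus\mathbb F v_0$; as $\rho_{v_0}$ is parity-preserving, $\dim V_{\bar 0}=(\dim\mathfrak g_{\bar 0}-\dim\mathfrak a_{\bar 0})+1=m+2$ and $\dim V_{\bar 1}=\dim\mathfrak g_{\bar 1}-\dim\mathfrak a_{\bar 1}=\lceil n/2\rceil$. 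If instead $\dim\mathfrak a=\zeta(\mathfrak g)-1$, then $\dim\mathfrak b=\dim V$, so $V=\mathfrak b$; the isotropy bounds $\dim\mathfrak a_{\bar 0}\le m$, $\dim\mathfrak a_{\bar 1}\le\lfloor n/2\rfloor$ from the proof of Lemma \ref{lem2}, together with $\dim V_{\bar 0}=\dim\mathfrak g_{\bar 0}-\dim\mathfrak a_{\bar 0}\ge m+2$, force $(\dim\mathfrak a_{\bar 0},\dim\mathfrak a_{\bar 1})=(m-1,\lfloor n/2\rfloor)$, and again $(\dim V_{\bar 0},\dim V_{\bar 1})=(m+2,\lceil n/2\rceil)$. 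Thus for $v_0$ even the super-dimension is $(m+2,\lceil n/2\rceil)$, and for $v_0$ odd it is $(\lceil n/2\rceil,m+2)$.

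\emph{Each possibility is attained.} For any $\mathfrak g$-module $V$, interchanging $V_{\bar 0}$ with $V_{\bar 1}$ and keeping each operator $\lambda(x)$ (an even, resp. odd, endomorphism of $V$ is again even, resp. odd, on the new space) produces a faithful $\mathfrak g$-module $\Pi V$ of the same total dimension with $\dim(\Pi V)_{\bar 0}=\dim V_{\bar 1}$ and $\dim(\Pi V)_{\bar 1}=\dim V_{\bar 0}$. The representation $\pi$ of Theorem \ref{theorem1} has super-dimension $(m+2,\lceil n/2\rceil)$, so $\Pi$ applied to it realizes $(\lceil n/2\rceil,m+2)$; this settles $\mathfrak h_{m,n}$. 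For $\mathfrak h_n$ and each $i\in\{0,\dots,n\}$ I exhibit an explicit faithful module of super-dimension $(i+1,n-i+1)$ on $\mathbb F^{(i+1)|(n-i+1)}$ generalizing $\pi'$: letting the single ``extra'' odd coordinate play the role of $v_0$, let $z$ and $w_1,\dots,w_i$ act into the first $i+1$ even coordinates as in $\pi'$, let $v_{i+1},\dots,v_n$ act into the remaining $n-i$ odd coordinates, and define $v_1,\dots,v_i$ and $w_{i+1},\dots,w_n$ on the complementary coordinates (with a sign on $w_{i+1},\dots,w_n$) so that $[v_p,w_p]=z$ holds identically. One checks that $\ker\rho_{v_0}=\langle v_1,\dots,v_i,w_{i+1},\dots,w_n\rangle$ is abelian, that $z$ acts nontrivially, and that all other brackets vanish; hence the module is faithful by Lemma \ref{lem1} and has the minimal dimension $n+2$.

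I expect the main work to be the routine but lengthy bracket verification for the $\mathfrak h_n$ construction — writing each generator as a combination of matrix units on $\mathbb F^{(i+1)|(n-i+1)}$ and checking $[v_p,w_p]=z$ together with the vanishing of every other bracket across all index ranges — and, in the first part, the bookkeeping in the subcase $\dim\mathfrak a=\zeta(\mathfrak g)-1$, where one must track carefully that $\rho_{v_0}$ is parity-preserving precisely when $v_0$ is even.
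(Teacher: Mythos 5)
Your proposal is correct and its core argument for $\mathfrak{h}_{m,n}$ — the case split on whether $\dim\mathfrak{a}$ equals $\zeta(\mathfrak{g})$ or $\zeta(\mathfrak{g})-1$, combined with the super-dimension of $\mathfrak{a}$ from Lemma \ref{lem2} and the bound $\dim V_{\bar{0}}\geq m+2$ (resp.\ $\dim V_{\bar{1}}\geq m+2$) from Lemma \ref{proposition5} — is essentially the paper's proof. You do, however, take two genuinely shorter routes elsewhere. First, for $\mathfrak{h}_{n}$ you dispose of the ``at most $n+1$ possibilities'' direction in one line: $z$ is odd and acts nontrivially, so $v_{0}$ and $zv_{0}$ are nonzero homogeneous vectors of opposite parity, forcing $\dim V_{\bar{0}},\dim V_{\bar{1}}\geq 1$, which with $\dim V=n+2$ already gives the list; the paper instead runs $\mathfrak{h}_{n}$ through the same two-case machinery as $\mathfrak{h}_{m,n}$. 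Second, you realize the super-dimension $(\lceil n/2\rceil,m+2)$ for $\mathfrak{h}_{m,n}$ by applying the parity functor $\Pi$ to the representation of Theorem \ref{theorem1}, whereas the paper writes out a second explicit matrix realization (its own Remark 3.4 notes the $\Pi$ trick only afterwards). Both shortcuts are sound and buy brevity; the paper's explicit matrices buy concreteness. Your $\mathfrak{h}_{n}$ realization is the same family as the paper's display (\ref{eq71}), so the only genuinely deferred work in your write-up is the routine bracket check there, which does go through.
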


\begin{proof}
Let $v_{0}, \mathfrak{a}, V_0$ be as in Lemma \ref{lem2047}. Since $\mathfrak{a}$ does not contain the center $\mathbb{F}z$, there exists a subalgebra $\mathfrak{a}'$ containing $z$ such that $\mathfrak{g}=\mathfrak{a}\oplus \mathfrak{a}'.$
Since $\dim V=\dim \mathfrak{g}-\zeta(\frak{g})+1$ and $\dim \mathfrak{a}\leq \zeta(\frak{g})$,
we have $\dim \mathfrak{g}-\zeta(\frak{g}) \leq \dim V_0 \leq \dim \mathfrak{g}-\zeta(\frak{g})+1$. It is enough to consider the following two cases.
\\

\noindent\textit{Case 1}: $\dim V_0= \dim \mathfrak{g}-\zeta(\frak{g})$. Then $\dim \mathfrak{a}= \zeta(\frak{g})$
and Lemma \ref{lem2047}  yields $v_{0}\notin V_0$. Then we have $\dim \mathfrak{a}'= \dim \mathfrak{g}-\zeta(\frak{g})$.
Since $\dim V= \dim \mathfrak{g}-\zeta(\frak{g})+1$,  it easy to see that  $V$ has an $\mathbb{F}$-basis
\begin{eqnarray}\label{eqlc1533}
\{v_{0}, xv_{0}\mid  \mbox{$x$ runs over a homogeneous basis of $\mathfrak{a'}$}\}.
\end{eqnarray}

For $\frak{g}=\frak{h}_{m,n}$, by  Lemma \ref{lem2} we have $\dim \mathfrak{a}_{\bar{0}}=m$ and $\dim \mathfrak{a}_{\bar{1}}=\lfloor n/2 \rfloor$.
Hence, $\dim \mathfrak{a}'_{\bar{0}}=m+1$,
 $\dim \mathfrak{a}'_{\bar{1}}=\lceil n/2 \rceil$.
By (\ref{eqlc1533}), if $v_{0}\in V_{\bar{0}}$  then $\dim V_{\bar{0}}= m+2$ and $\dim V_{\bar{1}}= \lceil n/2 \rceil$;
if $v_{0}\in V_{\bar{1}}$, then $\dim V_{\bar{0}}= \lceil n/2 \rceil$ and  $\dim V_{\bar{1}}= m+2 $.

For $\frak{g}=\frak{h}_{n}$,  by Lemma \ref{lem2}, $\dim \mathfrak{a}_{\bar{0}}=i$ and $\dim \mathfrak{a}_{\bar{1}}=n-i,$ $ i=0, \ldots, n.$
Hence, $\dim \mathfrak{a}'_{\bar{0}}=i$ and $\dim \mathfrak{a}'_{\bar{1}}=n+1-i,$ $ i=0, \ldots, n.$
Therefore we have $\dim V_{\bar{0}}= i+1 $ and $\dim V_{\bar{1}} =n+1-i$, where $i=0,\ldots, n$.
\\

\noindent\textit{Case 2}: $\dim V_0=\dim \mathfrak{g}-\zeta(\frak{g})+1$. Then $\dim \mathfrak{a}=  \zeta(\frak{g})-1$  and
$\dim \mathfrak{a}'= \dim \mathfrak{g}-\zeta(\frak{g})+1$.
Since $\dim V= \dim \mathfrak{g}-\zeta(\frak{g})+1$, one sees that $V$ has an $\mathbb{F}$-basis
\begin{eqnarray}\label{mwjxe1}
\{xv_{0}\mid  \mbox{$x$ runs over a homogeneous basis of $\mathfrak{a}'$}\}.
\end{eqnarray}

For $\frak{g}=\frak{h}_{m,n}$,
clearly, $\dim \mathfrak{a}'_{\bar{0}}=m+i$ and
 $\dim \mathfrak{a}'_{\bar{1}}=\lceil n/2 \rceil+2-i$ for some $i\in \{1, 2\}$.
By (\ref{mwjxe1}), if $v_{0}\in V_{\bar{0}}$, then $\dim V_{\bar{0}}=m+i$ and
 $\dim V_{\bar{1}}=\lceil n/2 \rceil+2-i$; if $v_{0}\in V_{\bar{1}}$,
 then $\dim V_{\bar{0}}=\lceil n/2 \rceil+2-i$ and
 $\dim V_{\bar{1}}=m+i$ for some $i\in \{1, 2\}$.
By Lemma \ref{proposition5}, it must be $i=2$.

For $\frak{g}=\frak{h}_{n}$, then $\dim \mathfrak{a}=n-1$. Clearly, $\dim \mathfrak{a'}=n+2$, $\dim \mathfrak{a}'_{\bar{0}}=i+1$ and $\dim \mathfrak{a}'_{\bar{1}}=n+1-i,$ $ i=0, \ldots, n-1$. Therefore, we have either $\dim V_{\bar{0}}= i+1 $ and $\dim V_{\bar{1}} =n+1-i$, or
$\dim V_{\bar{0}}=n+1-i $ and $\dim V_{\bar{1}} = i+1$, for some $i\in \{0,\ldots, n-1\}$.
\\

Up to now, we have shown that:
 \begin{itemize}
 \item For $\mathfrak{h}_{m, n}$, the super-dimension $(\dim V_{\bar{0}},\dim V_{\bar{1}})$ has at most $2$ possibilities:
 $$\mbox{$(m+2,\lceil n/2 \rceil)$, $(\lceil n/2 \rceil,m+2)$;}$$
 \item For $\mathfrak{h}_{n}$, the super-dimension $(\dim V_{\bar{0}},\dim V_{\bar{1}})$ has at most $n+1$ possibilities:
 $$ (i+1,n-i+1),\quad i=0,  \ldots, n.$$
 \end{itemize}

Next let us realize the faithful representations of the super-dimensions indicated above.
For $\mathfrak{h}_{m, n}$, (\ref{eq12}) and (\ref{eq13}) give a minimal faithful representation of $\mathfrak{h}_{m, n}$ with super-dimension $(m+2, \lceil n/2 \rceil)$.
Consider the even linear mapping
$$\pi: \mathfrak{h}_{m, n}\longrightarrow \mathfrak{gl}(\lceil n/2 \rceil\mid m+2)$$
given by
\begin{eqnarray*}
&&\pi(u_{i})=e_{\lceil n/2 \rceil+1,\lceil n/2 \rceil+i+1},\quad
 \pi(v_{i})=e_{\lceil n/2 \rceil+i+1,\lceil n/2 \rceil+m+2},\quad \pi(z)=e_{\lceil n/2 \rceil+1,\lceil n/2 \rceil+m+2},\\
&&\pi(w_{2k-1})=\frac{1}{2}e_{k,\lceil n/2 \rceil+m+2}+ e_{\lceil n/2 \rceil+1,k}, \quad\pi(w_{2k})=\frac{\sqrt{-1}}{2}e_{k,\lceil n/2 \rceil+m+2}- \sqrt{-1}e_{\lceil n/2 \rceil+1,k},
\end{eqnarray*}
where $1\leq i\leq m, 1\leq 2k, 2k-1\leq n.$
Under $\pi$, an element of form (\ref{eq1013w})
 is presented as
\begin{equation}\label{eq61}
\left(\begin{array}{cccc|cccccc}
        &     &     &      &     &     & & & &  \widetilde{d}_{1,2}       \\
        &     &     &      &     &     & & & &  \widetilde{d}_{3,4}       \\
        &     &     &      &     &     & & & &  \vdots                    \\
        &     &     &      &     &     & & & &  \widetilde{d}_{n-1,n}     \\ \hline
d_{1,2} &d_{3,4}&\cdots &d_{n-1,n} &   0    &a_{1}&a_{2}&\cdots&a_{m}&c       \\
        &       &     &    &             &  &     &     &           &b_{1}             \\
        &       &     &    &                         &  &     &     &      &     \vdots     \\
        &       &     &    &            &    &     &      &     &b_{m}             \\
        & &     &     &      &     &   & & & 0\\
        & & & & & & & & &
 \end{array}\right)\quad (\mbox{$n$ even})
 \end{equation}
or
\begin{equation}\label{eq62}
\left(\begin{array}{ccccc|ccccc}
        &     &     &      &     &     & &  & & \widetilde{d}_{1,2}       \\
        &     &     &      &     &     & &  &  &\widetilde{d}_{3,4}       \\
        &     &     &      &     &     & &  &  &\vdots                    \\
        &     &     &      &     &     & &  & & \widetilde{d}_{n-1,n}     \\
        &&&&&&&&&                                \frac{1}{2}d_{n}  \\                           \hline
d_{1,2} &d_{3,4}&\cdots &d_{n-1,n}&d_{n} &   0    &a_{1}&\cdots&a_{m}&c       \\
        &       &     &    &          &   &  &     &              &b_{1}             \\
        &&&&&&&&& b_{2}\\
        &       &     &    &          &               &       &     &      &     \vdots     \\
        &       &     &    &         &   &    &           &     &b_{m}             \\
        & &     &     &      &   &  &   & &  0
 \end{array}\right)\quad (\mbox{$n$  odd}),
 \end{equation}
where $d_{i,j}=d_{i}-\sqrt{-1}d_{j}$, $\widetilde{d}_{i,j}=\frac{1}{2}(d_{i}+\sqrt{-1}d_{j}).$
It is routine to verify that $\pi$ is a faithful representation with  super-dimension $(\lceil n/2\rceil, m+2)$.

For $0\leq r\leq n$, let us consider the even linear mapping
$$\pi':\mathfrak{h}_{n}\longrightarrow \mathfrak{gl}(r+1\mid n-r+1)$$
given by
\begin{eqnarray*}
&&\pi'(v_{i})=e_{1,i+1}, \quad
 \pi'(v_{j})=-e_{j+1,n+2}, \\
&&\pi'(z)=e_{1,n+2},\quad
 \pi'(w_{k})=e_{k+1,n+2}, \quad
\pi'(w_{l})=e_{1,l+1},
\end{eqnarray*}
where $1\leq i, k\leq r$ and $r+1\leq j,l\leq n.$
Under $\pi'$, an element (\ref{eq1057w}) of $\mathfrak{h}_{n}$ is presented as
\begin{equation}\label{eq71}
\left(\begin{array}{cccc|cccc}
0&a_{1}&\cdots&a_{r}&b_{r+1}&\cdots&b_{n}&c\\
 &     &      &     &       &      &     &b_{1}\\
 &     &      &     &       &      &     &\vdots        \\
 &     &      &     &       &      &     &b_{r}        \\ \hline
 &     &      &     &       &      &     &-a_{r+1}\\
 &     &      &     &       &      &     &\vdots       \\
 &     &      &     &       &      &     &-a_{n}        \\
 &     &      &     &       &      &     &0
\end{array}\right).
\end{equation}
It is routine to verify that $\pi'$ is a faithful representation  with  super-dimension $(r+1,  n-r+1)$ for all $r= 0,\ldots, n$.
\end{proof}

Recall that for  $i\in \{0,1\}$,
$$
\mu_{i}(\mathfrak{g})=\min \{\dim V_{\bar{i}}\mid  \mbox{$V$ is a faithful $\mathfrak{g}$-module}\},
$$
$$
\mu_{i}^{*}(\mathfrak{g})=\min \{\dim V\mid   \mbox{$V$ is a faithful $\mathfrak{g}$-module with} \dim V_{\bar{i}}=\mu_{i}(\mathfrak{g})\}.
$$

\begin{theorem}\label{corollary5}   We have
\begin{eqnarray*}
\mu_{0}(\frak{g})=\mu_{1}(\frak{g})=\left\{\begin{array}{ll}
  \min\{m+2, \lceil n/2 \rceil\} & \frak{g}=\frak{h}_{m,n}\\
1 &\frak{g}=\frak{h}_{n}
\end{array}\right.
\end{eqnarray*}
and
\begin{eqnarray*}
\mu_{0}^{*}(\mathfrak{g})=\mu_{1}^{*}(\mathfrak{g})=\left\{\begin{array}{ll}
  m+\lceil n/2 \rceil+2 & \frak{g}=\frak{h}_{m,n}\\
n+2 &\frak{g}=\frak{h}_{n}.
\end{array}\right.
\end{eqnarray*}
\end{theorem}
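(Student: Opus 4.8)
The plan is to determine $\mu_0$ and $\mu_1$ first and then read off $\mu_{0,1}$ and $\mu_{1,0}$ almost immediately from Theorem~\ref{corollary4}. For the upper bounds on $\mu_0(\mathfrak{g})$ and $\mu_1(\mathfrak{g})$ I would just invoke the explicit representations already constructed: for $\mathfrak{g}=\mathfrak{h}_{m,n}$ the minimal faithful modules with super-dimensions $(m+2,\lceil n/2\rceil)$ and $(\lceil n/2\rceil,m+2)$ from the proofs of Theorems~\ref{theorem1} and~\ref{corollary4} give $\mu_0(\mathfrak{h}_{m,n}),\mu_1(\mathfrak{h}_{m,n})\leq\min\{m+2,\lceil n/2\rceil\}$; for $\mathfrak{g}=\mathfrak{h}_n$ the family $\pi'$ of Theorem~\ref{corollary4} realizing $(r+1,n-r+1)$ for $r=0,\dots,n$ gives, at $r=0$ and $r=n$, faithful modules with $\dim V_{\bar 0}=1$ and with $\dim V_{\bar 1}=1$, so $\mu_0(\mathfrak{h}_n),\mu_1(\mathfrak{h}_n)\leq 1$.

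For the lower bounds I would start from an \emph{arbitrary} faithful module $V$ and choose, via Lemma~\ref{lem2047}, a homogeneous $v_0$ with $zv_0\neq 0$. If $\mathfrak{g}=\mathfrak{h}_n$, the center is odd, so $z$ sends $V_{\bar 0}$ into $V_{\bar 1}$ and $V_{\bar 1}$ into $V_{\bar 0}$; if either component were zero then $z$ would act trivially, contradicting faithfulness by Lemma~\ref{lem1}, so $\dim V_{\bar 0},\dim V_{\bar 1}\geq 1$. If $\mathfrak{g}=\mathfrak{h}_{m,n}$, I would split on the parity of $v_0$, the two cases being symmetric; suppose $v_0$ is even. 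Then Lemma~\ref{proposition5} already gives $\dim V_{\bar 0}\geq m+2$, and to control $\dim V_{\bar 1}$ I would study the linear map $\phi\colon\mathrm{span}(w_1,\dots,w_n)\to V_{\bar 1}$, $w\mapsto wv_0$. The crucial observation is that $\ker\phi$ is isotropic for the non-degenerate symmetric form $B$ on the odd part given by $[x,y]=B(x,y)z$: for $w,w'\in\ker\phi$ one computes $B(w,w')\,zv_0=[w,w']v_0=w(w'v_0)+w'(wv_0)=0$, so $B(w,w')=0$; hence $\dim\ker\phi\leq\lfloor n/2\rfloor$ and $\dim V_{\bar 1}\geq\dim\mathrm{im}\,\phi\geq\lceil n/2\rceil$. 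Consequently $\dim V_{\bar 0}$ and $\dim V_{\bar 1}$ are both at least $\min\{m+2,\lceil n/2\rceil\}$, which matches the upper bound and yields $\mu_0(\mathfrak{g})=\mu_1(\mathfrak{g})$ with the stated value. This isotropy/rank estimate is the one genuinely non-formal step and I expect it to be the main obstacle: Lemma~\ref{proposition5} only controls the homogeneous component of $V$ containing $v_0$, so the bracket relations $[w_i,w_j]=\delta_{ij}z$ among the odd generators must be exploited to force the complementary component to be large as well.

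Finally, for $\mu_{0,1}$ and $\mu_{1,0}$ one always has $\mu_{i,j}(\mathfrak{g})\geq\mu(\mathfrak{g})$ straight from the definition of $\mu$, so it suffices to produce a faithful module of dimension exactly $\mu(\mathfrak{g})$ whose $\bar 0$-part (resp.\ $\bar 1$-part) has dimension $\mu_0(\mathfrak{g})$ (resp.\ $\mu_1(\mathfrak{g})$). Theorem~\ref{corollary4} makes this transparent: for $\mathfrak{h}_{m,n}$ pick whichever of the super-dimensions $(m+2,\lceil n/2\rceil)$, $(\lceil n/2\rceil,m+2)$ places the smaller entry in the required slot, both modules having dimension $m+\lceil n/2\rceil+2$; for $\mathfrak{h}_n$ take $r=0$ or $r=n$ in the family above, of dimension $n+2$. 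This gives $\mu_{0,1}(\mathfrak{g})=\mu_{1,0}(\mathfrak{g})=\mu(\mathfrak{g})$ and completes the proof.
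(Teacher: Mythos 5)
Your proposal is correct and follows essentially the same route as the paper: explicit matrix realizations for the upper bounds, Lemma \ref{proposition5} for the $m+2$ bound on the component containing $v_{0}$, and an isotropy argument forcing the complementary component to have dimension at least $\lceil n/2\rceil$ (the paper routes this through Lemma \ref{lem2} applied to $\mathfrak{a}=\ker\rho_{v_{0}}$ and Lemma \ref{lem2047}(4), whereas you verify the isotropy of $\ker\phi$ on the odd part directly, which amounts to the same computation). Your observation that for $\mathfrak{h}_{n}$ the odd central element forces both homogeneous components to be nonzero is a slightly more direct version of the paper's argument but not a substantively different one.
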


\begin{proof}
Let $(\lambda,V)$ be a faithful representation of $\mathfrak{g}$.
Evidently,
\begin{equation}\label{10}
\mu_{0}^{*}(\mathfrak{g})\geq \mu(\mathfrak{g});  \mu_{1}^{*}(\mathfrak{g})\geq \mu(\mathfrak{g}).
\end{equation}
Keep the notations in Lemma \ref{lem2047}. As in the proof of Theorem \ref{corollary4}, there exists a subalgebra $\mathfrak{a}'$ containing $z$ such that $\mathfrak{g}=\mathfrak{a}\oplus \mathfrak{a}'.$
By Lemma \ref{lem2},  $\dim \mathfrak{a}'\geq \dim \mathfrak{g}-\zeta(\mathfrak{g})$.
Hence, by Lemma \ref{lem2047}(4), if $v_{0}$ is even, then $\dim V_{\bar{1}}\geq \dim \mathfrak{a}'_{\bar{1}}$;
if $v_{0}$ is odd, then $\dim V_{\bar{0}}\geq \dim \mathfrak{a}'_{\bar{1}}$.

Let $\mathfrak{g}=\mathfrak{h}_{m, n}$. By Lemma \ref{lem2}, we have $\dim \mathfrak{a}'_{\bar{1}}\geq \lceil n/2 \rceil$. So,
if $v_{0}$ is even, then $\dim V_{\bar{1}}\geq \lceil n/2 \rceil$; if $v_{0}$ is odd, then $\dim V_{\bar{0}}\geq \lceil n/2 \rceil$.
By Lemma \ref{proposition5},  if $v_{0}$ is even, then $\dim V_{\bar{0}}\geq m+2$; if $v_{0}$ is odd, then $\dim V_{\bar{1}}\geq m+2$.
Therefore, $\dim V_{\bar{0}}\geq \min\{m+2, \lceil n/2 \rceil\}$ and $\dim V_{\bar{1}}\geq \min\{m+2, \lceil n/2 \rceil\}$.
Since  (\ref{eq12}) and (\ref{eq13}) define  a faithful representation
of $\mathfrak{h}_{m, n}$ with  super-dimension $(m+2, \lceil n/2 \rceil)$,
and (\ref{eq61}) and (\ref{eq62}) define a faithful representation
of $\mathfrak{h}_{m, n}$ with super-dimension $(\lceil n/2 \rceil, m+2)$,
we have
$$\mu_{0}(\frak{h}_{m,n})=\mu_{1}(\frak{h}_{m,n})=\min\{m+2, \lceil n/2 \rceil\}.
$$
It follows from  (\ref{10}) that
$$\mu_{0}^{*}({\mathfrak{h}_{m, n}})=\mu_{1}^{*}({\mathfrak{h}_{m, n}})=m+\lceil n/2 \rceil+2.
$$

Let $\mathfrak{g}=\mathfrak{h}_{n}$. By Lemma \ref{lem2}, we have $\dim \mathfrak{a}'_{\bar{1}}\geq 1$.
So, if $v_{0}$ is even, then $\dim V_{\bar{1}}\geq 1$; if $v_{0}$ is odd, then $\dim V_{\bar{0}}\geq 1$.
On the other hand, if $v_{0}$ is even, then $v_{0}\in V_{\bar{0}}$ and $\dim V_{\bar{0}}\geq 1$;
if $v_{0}$ is odd, then $v_{0}\in V_{\bar{1}}$ and $\dim V_{\bar{1}}\geq 1$.
Then by  (\ref{eq71}), we have
$$\mu_{0}(\mathfrak{h}_{n})=\mu_{1}(\mathfrak{h}_{n})=1.$$
 It follows from (\ref{10}) that
$$\mu_{0}^{*}(\mathfrak{h}_{n})=\mu_{1}^{*}(\mathfrak{h}_{n})=n+2.$$
\end{proof}
\begin{remark} Let $L$ be a Lie superalgebra and $\Pi$ the parity functor of the category of   $\mathbb{Z}_{2}$-graded vector spaces. It is well known that if $V$ is an $L$-module, then so is $\Pi(V)$ with respect to the original module action. Therefore, in general we have
$$
\mu_{0}(L)=\mu_{1}(L), \quad \mu_{0}^{*}(L)=\mu_{1}^{*}(L).
$$
This fact may be used to shorten the proofs of Theorems \ref{corollary4} and \ref{corollary5}.
\end{remark}

\noindent \textbf{Acknowledgements}

The authors are grateful to the anonymous referee for his/her valuable  comments and helpful suggestions. The first author was supported by the NSF of China (11171055, 11471090) and the NSF  of HLJ Province, China (A201412, JC201004)

\end{document}